\documentclass[a4paper,12pt]{amsart}

\usepackage{fullpage}

\makeindex

\usepackage{xcolor}
\usepackage[
colorlinks=true,
citecolor=blue,
urlcolor=blue,
linkcolor=blue,
pdfborder={0 0 0},
breaklinks
]{hyperref}

\usepackage{pifont}
\usepackage{amsfonts,graphics,amsmath,amsthm,amscd,amssymb,amsmath,latexsym,euscript, enumerate,kotex,mathtools}
\usepackage{epsfig,url}
\usepackage{flafter}
\usepackage[all,cmtip,line]{xy}
\usepackage{array}
\usepackage[english]{babel}
\usepackage{overpic}
\usepackage{subfig}
\usepackage{multirow}
\usepackage{tikz-cd}
\usepackage[T1]{fontenc}
\usepackage{lmodern}
\usepackage{pifont}

\usepackage{microtype}
\usepackage{wrapfig}
\usepackage[shortlabels]{enumitem}
\setlist[enumerate, 1]{1\textsuperscript{o}}

\newtheorem{theorem}{Theorem}[section]
\newtheorem{lemma}[theorem]{Lemma}

\newtheorem{conjecture}[theorem]{Conjecture}

\theoremstyle{definition} 
\newtheorem{definition}[theorem]{Definition}
\newtheorem{definition-lemma}[theorem]{Definition-Lemma}

\theoremstyle{remark}
\newtheorem{remark}[theorem]{Remark}

\numberwithin{equation}{section}

\newcommand{\C}{\mathbb{C}}
\newcommand{\R}{\mathbb{R}}
\newcommand{\Z}{\mathbb{Z}}

\newcommand{\Q}{\mathbb{Q}}

\newcommand{\A}{\mathbb{A}}

\def\Proj{\operatorname{Proj}}
\def\Spec{\operatorname{Spec}}

\def\codim{\operatorname{codim}}

\newcommand{\floor}[1]{\left\lfloor #1 \right\rfloor}
\newcommand{\ceil}[1]{\left\lceil #1 \right \rceil}
\let\oldframe\frame
\renewcommand\frame[1][allowframebreaks]{\oldframe[#1]}

\title[On constructibility of klt type varieties and simple $D$-module]{On constructibility of klt type varieties and simple $D$-module}

\date{\today}
\subjclass[2010]{14E05, 14F18}
\keywords{}

\begin{document}

\author[D.~Kim]{Donghyeon Kim}
\address[Donghyeon Kim]{Department of Mathematics, Ewha Womans University, 52 Ewhayeodae-gil, Seodaemun-gu, Seoul 03760, Republic of Korea}
\email{narimial0@ewha.ac.kr}

\begin{abstract}
In this paper, we prove that the combination of dense klt type and $\mathcal{D}$-simplicity property establishes the klt type property.
\end{abstract}

\maketitle
\allowdisplaybreaks

\section{Introduction}
The notion of \emph{of klt type} (for the definition, see Definition \ref{hyogok}) is a generalization of the notion of klt to non-$\Q$-Gorenstein varieties, and perhaps a more ``moral'' notion than klt. Since the condition of being klt is a constructible condition (cf. \cite[Lemma 4.44]{Kol23}), it is natural to ask whether the condition of being klt type is a constructible condition (rather than merely being klt, which requires the $\Q$-Gorenstein property in fibers).

\smallskip

\begin{conjecture}[{cf. \cite[Problem 4.7]{HJ26}}] \label{conjecture}
Let $X\to S$ be an affine family of normal varieties, and let $S'\subseteq S$ be a Zariski-dense subset of closed points such that $X_s$ is of klt type. Suppose $\eta\in S$ is the generic point. Then, $X_{\overline{\eta}}$ is of klt type. 
\end{conjecture}

Note that this conjecture can be viewed as a characteristic $0$ version of the conjecture posed by Schwede--Smith (cf. \cite[Question 7.2]{SS10}) and a local version of a conjecture posed by Choi--Li--Zhou (cf. \cite[Question 1.1]{CLZ25}). The major problem in proving Conjecture \ref{conjecture} (as well as \cite[Question 7.2]{SS10} and \cite[Question 1.1]{CLZ25}) is that we cannot simply lift a boundary $\Q$-Weil divisor on $X_s$ for $s\in S'$ to $X$.

\smallskip

Let us introduce another conjecture of Mallory.

\begin{conjecture}[{cf. \cite[Remark 8.5]{Mal21}}] \label{Mallory}
Let $X:=\Spec R$ be an lc type affine variety, and assume $\mathcal{O}_X$ is $\mathcal{D}_X$-simple. Then, $X$ is of klt type.
\end{conjecture}

Note that, as remarked in \cite[Remark 8.5]{Mal21}, if \cite[Problem 5.1.2]{HW02} and \cite[Question 7.2]{SS10} are true, then Conjecture \ref{Mallory} is also true. It is worth remarking that \cite[Proof of Theorem 5.27]{BJN19} proves that if $\bigoplus_{m\ge 0}\mathcal{O}_X(\floor{-mK_X})$ is a finitely generated $\mathcal{O}_X$-algebra and \cite[Problem 5.1.2]{HW02} is true, then Conjecture \ref{Mallory} follows.

\smallskip

Our main theorem is that if we merge the conditions of Conjecture \ref{conjecture} and \ref{Mallory}, then we have the klt type property.

\begin{theorem} \label{simple}
Let $X\to S$ be an affine family of normal varieties, and let $S'\subseteq S$ be a Zariski-dense subset of closed points such that $X_s$ is of klt type. Suppose $\eta\in S$ is the generic point, and assume $\mathcal{O}_{X_{\overline{\eta}}}$ is $\mathcal{D}_{X_{\overline{\eta}}}$-simple. Then, $X_{\overline{\eta}}$ is of klt type.
\end{theorem}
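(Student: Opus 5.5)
The plan is to use the fibers $X_s$, $s\in S'$, only to produce an \emph{lc type} structure on the generic geometric fiber, and then to upgrade lc type to klt type using the $\mathcal{D}$-simplicity hypothesis. The first step is to spread out and specialize. After shrinking $S$ to a dense open subset, I will assume $S$ is smooth and affine, $X\to S$ is flat with normal geometric fibers, and there is a log resolution $\pi\colon Y\to X$ which restricts to log resolutions $Y_s\to X_s$ on all fibers over the dense set $S'\cap S$. The natural invariant to propagate is not a boundary divisor, since boundaries cannot be lifted. Instead I will use the multiplier-type ideal sheaves $\mathcal{J}(X,\Delta)$ in the de Fernex--Hacon sense, i.e. the ideals $\mathcal{J}_m(X)=\pi_*\mathcal{O}_Y(\lceil K_Y-\tfrac1m\pi^\natural(mK_X)\rceil)$. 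These are built from $\mathcal{O}_X(-mK_X)\cdot\mathcal{O}_Y$, which is defined over $S$ and commutes with restriction to general fibers for each fixed $m$, after shrinking $S$ depending on $m$.

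The second step uses the fact that $X_s$ is of klt type to deduce that the de Fernex--Hacon multiplier ideal of $X_s$ is trivial; for klt type varieties, $\mathcal{J}(X_s)=\mathcal{O}_{X_s}$. The difficulty is that triviality is attained at some level $m$ that may depend on $s$. Since $S'$ is Zariski dense, it is countably infinite or larger, so I will argue as follows. For each $m$, the locus $S_m$ of $s$ where $\mathcal{J}_m(X_s)=\mathcal{O}_{X_s}$ is constructible, and $S'\subseteq\bigcup_m S_m$. Over an uncountable field, some $S_m$ must then contain a dense set, and hence a dense open set. When the field is countable, I will instead work with the "lc" version, where the log canonical threshold condition $\ge 1$ is closed under limits: $\mathcal{J}((1-\epsilon)\,\cdot)$ trivial for all $\epsilon$. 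This is the step I expect to be the main obstacle. Uniformity in $m$ is exactly where the non-$\mathbb{Q}$-Gorenstein nature bites, and a dense set can fail to be caught by a single constructible piece. The fallback is to only claim that $X_{\overline\eta}$ is lc type, i.e. $\mathcal{J}_{1-\epsilon}$-trivial. Being lc type is characterized by the non-lc ideal and behaves semicontinuously, so it can be deduced from density via generic semicontinuity of discrepancies computed on the fixed resolution $Y$.

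The third step is to conclude via $\mathcal{D}$-simplicity, in the spirit of Mallory's Conjecture \ref{Mallory}. Knowing that $X_{\overline\eta}$ is lc type, I will take a boundary $\Delta$ with $(X_{\overline\eta},\Delta)$ lc. The non-klt locus is then cut out by the non-klt ideal $\mathcal{J}(X_{\overline\eta},\Delta)$ in the de Fernex--Hacon sense, which is minimized over all boundaries. I will show that the minimal such ideal (the intersection, or generic choice, over all boundaries) is a $\mathcal{D}_{X_{\overline\eta}}$-submodule of $\mathcal{O}_{X_{\overline\eta}}$. The reason is that lc centers, i.e. the minimal non-klt locus of a variety of lc type, are preserved by derivations, as in the theory of lc centers being invariant under vector fields and automorphisms, and more generally under differential operators, in analogy with the result that test ideals are $\mathcal{D}$-submodules in characteristic $p$. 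By $\mathcal{D}$-simplicity this ideal is $0$ or the unit ideal. It is nonzero, so it is the unit ideal, and $X_{\overline\eta}$ is of klt type. The $\mathcal{D}$-stability of this canonical non-klt ideal also needs care, and I would prove it by reducing to derivations generating $\mathcal{D}$ in characteristic $0$ and using that the ideal is intrinsic, hence invariant under the infinitesimal automorphisms $\exp(tD)$ over $k[t]/t^n$.
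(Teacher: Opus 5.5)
Both your second and third steps have genuine gaps, and each is essentially one of the open problems this theorem is designed to avoid.

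\textbf{Step 2.} The Baire-type argument fails. $S'$ is only assumed Zariski-dense, so it may be countable, or lie in a countable union of proper closed subsets. Then $S'\subseteq\bigcup_m S_m$ does not force any $S_m$ to contain a dense open set. (The ground field is $\C$, so the problem lies in $S'$, not in the field.) Your fallback, that lc type spreads from a dense set of fibres by semicontinuity of discrepancies on a fixed resolution $Y$, has the same defect:
\begin{itemize}
\item the relevant discrepancies involve $\frac{1}{m}\pi^{\natural}(mK_{X_s})$, and no single $Y$ computes these for all $m$;
\item $\mathcal{O}_X(-mK_X)\otimes\kappa(s)=\mathcal{O}_{X_s}(-mK_{X_s})$ holds only after shrinking $S$ in an $m$-dependent way.
\end{itemize}
This is the lc analogue of Conjecture~\ref{conjecture}, and your sketch does not resolve it.

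\textbf{Step 3.} This step is a proof of Mallory's Conjecture~\ref{Mallory}, which is open. The proposed mechanism also fails: on a singular variety $\mathcal{D}_X$ is not generated by $\mathcal{O}_X$ and derivations. So invariance under the infinitesimal automorphisms $\exp(tD)$ says nothing about higher-order operators. For instance, on $R=\C[x^2,xy,y^2]$ every derivation has non-negative degree and so preserves the maximal ideal of the vertex. Yet $\partial_x^2$ restricts to an element of $\mathcal{D}(R)$ with $\partial_x^2(x^2)=2$. Stability under derivations therefore does not imply $\mathcal{D}$-stability.

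\textbf{The missing idea.} Use $\mathcal{D}$-simplicity of $X_{\overline{\eta}}$ only to produce one operator, and then exploit it quantitatively on the fibres $X_s$, $s\in S'$, where klt type is already known. The paper proceeds as follows.
\begin{itemize}
\item Take $0\neq f\in\mathcal{O}_X(-rG)$ with $G\sim_{\Q}-K_X$, and one operator $P$ with $P(f)=1$. Spread $P$ out over $S$ so that $P_s(f_s)=1$ for all $s\in S'$.
\item On each $X_s$, pass to the anticanonical model, which is klt and $\Q$-Gorenstein. There $\mathrm{lct}^{\mathrm{dFH}}(X_s;(f_s))$ is computed by a Koll\'ar component $T$.
\item Lemma~\ref{Kollar} gives $\mathrm{ord}_T(Pf)\ge\mathrm{ord}_T(f)-(\mathrm{ord}\,P)\,A(T)$. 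It is proved via slopes of the tangent sheaf of the log Fano $T$, and this is exactly where higher-order operators are handled. It yields $\mathrm{lct}^{\mathrm{dFH}}(X_s;(f_s))\ge 1/\mathrm{ord}\,P$, independently of $s$.
\item Lemma~\ref{yakmulsan}, a family version of Lazarsfeld's uniform generation argument, turns this uniform bound into a single $m$ with $A_{X_s,m}(\nu)>0$ for all $s\in S'$ and all $\nu$. This spreads to $X_{\overline{\eta}}$ and concludes via the de Fernex--Hacon criterion.
\end{itemize}
This supplies precisely the uniformity in $m$ that your Step 2 could not.
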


Usually, Conjecture \ref{conjecture} is considered a conjecture for birational geometry, and Theorem \ref{simple} states that the theory of differential operators can be involved in the problem of birational geometry.

\smallskip

Let us briefly explain how to deduce the theorem and how to overcome the boundary $\Q$-divisor lifting problem. By employing the argument in \cite[Corollary 11.2.22]{Laz04b}, we deduce that if $\mathrm{lct}^{\mathrm{dFH}}(X_s,\Delta_s,G_s)$ (for the definition, see Definition \ref{dFH log}) for an effective $\Q$-Weil divisor $0\le G\sim_{\Q} -K_X$, an affine analog of Tian's $\alpha$-invariant has a uniform lower bound, then $X_{\overline{\eta}}$ is of klt type (see also \cite[Proof of Proposition 4.4]{CP16}. Although \cite[Proof of Proposition 4.4]{CP16} is incorrect, its reliance on \cite[Proof of Theorem 11.2.21]{Laz04b} motivates our proof of Lemma \ref{yakmulsan}). The condition that ``$\mathcal{O}_{X_{\overline{\eta}}}$ is $\mathcal{D}_{X_{\overline{\eta}}}$-simple'' implies that there exists a differential operator $P$ that maps $G_s$ to $1$. Now, by employing Xu--Zhuang's argument in \cite{XZ26b}, we can bound $\mathrm{lct}^{\mathrm{dFH}}(X_s,\Delta_s,G_s)$ by $\frac{1}{\mathrm{ord}\,P}$ (cf. Lemma \ref{Kollar}), and we can prove the theorem. The crucial point is that the constant $r$ in Lemma \ref{Kollar} does not depend on $s$, and thus it can be a way to bypass the difficulty of lifting the boundary $\Q$-divisor.

\subsection*{Acknowledgement}
The author was partially supported by National Research Foundation of Korea (NRF) funded by the Ministry of Education (No. RS-2026-25555589) and the Ministry of Science and ICT (No. RS-2026-25506097).

\smallskip

ChatGPT is used during the development of this work, including initial discussions and possible proof strategies. In particular, the author used ChatGPT to digest the logic of \cite[Section 5]{XZ26b} and apply it to prove Lemma \ref{Kollar}. The author has independently verified the arguments and all mathematical claims in the manuscript and takes full responsibility for their correctness.

\section{Preliminaries}
Throughout this paper, unless otherwise stated. By a \emph{variety}, we mean an integral, separated scheme of finite type over $\C$. All varieties are assumed to be quasi-projective.

\smallskip

We now fix some notation and conventions that will be used throughout the paper. 

\begin{itemize}
    \item[(1)]
    A \emph{sub-couple} $(X,\Delta)$ consists of a normal variety $X$ and a $\Q$-Weil divisor $\Delta$ on $X$. If, in addition, $K_X+\Delta$ is $\Q$-Cartier, we call $(X,\Delta)$ a \emph{sub-pair}. For an effective $\Q$-Weil divisor $\Delta$, we say that $(X,\Delta)$ is a \emph{couple} and a \emph{pair}, respectively.

    \item[(2)]
    For a scheme $S$ and a point $s\in S$, we denote $\kappa(s)$ by the residue field of $\mathcal{O}_{S,s}$ and regard $s$ as the morphism $s:\Spec \kappa(s)\to S$.
    The associated geometric point is $\bar{s}:\Spec\overline{\kappa(s)}\to S$.
    For an integral scheme $X$, denote by $k(X)$ its function field.

    \item[(3)]
    If $\eta$ is the generic point of an integral scheme $S$, we write
    $\bar{\eta}:\Spec\overline{\kappa(\eta)}\to S$ and call
    $X_{\bar{\eta}}:=X\times_S \bar{\eta}$ the \emph{geometric generic fiber} of $X\to S$.

    \item[(4)]
    For a morphism $f:X\to S$ of schemes and a (geometric) point $s\to S$, set
    $$ X_s:=X\times_S s.$$
    If $\Delta$ is a divisor on $X$, we write $\Delta_s$ for its pullback to $X_s$ whenever it is defined.

    \item[(5)]
    For a morphism $\pi:X\to S$ of schemes and a point $s\to S$, and an ideal $\mathfrak{a}\subseteq \mathcal{O}_X$, we define
    $$ \mathfrak{a}_s:=\mathrm{Im}(\mathfrak{a}\otimes \kappa(s)\to \mathcal{O}_{X_s}).$$
    Moreover, if $f\in \mathcal{O}_X$, then we denote by $f_s$ to be the image of $f$ in $\mathcal{O}_{X_s}$.

    \item[(6)]
    For a morphism $f:X\to S$ between varieties, we say that $f$ is a \emph{fibration} if it is proper, surjective, and $f_*\mathcal{O}_X=\mathcal{O}_S$.
    
    \item[(7)] Let $(X,\Delta)$ be a sub-pair, $f:X'\to X$ a proper birational morphism between normal varieties, and $E$ a prime divisor on $X'$.
    The \emph{log discrepancy} of $E$ with respect to $(X,\Delta)$ is
    $$
      A_{X,\Delta}(E):= \mathrm{mult}_E\bigl(K_{X'}-f^*(K_X+\Delta)\bigr)+1.
    $$
    This definition does not depend on the choice of $f$. We say that $(X,\Delta)$ is \emph{sub-kawamata log terminal (sub-klt)} if $A_{X,\Delta}(E)>0$ for every prime divisor $E$ over $X$. If $(X,\Delta)$ is sub-klt and a pair, then we say that $(X,\Delta)$ is a \emph{klt}.

    \item[(8)]
    Let $\pi:X\to Z$ be a fibration of normal varieties and $D$ a $\Q$-Cartier divisor on $X$.
    The base ideal of the complete linear system $|D|$ is the image of the natural map
    $$
      \pi^*\pi_*\mathcal{O}_X(D)\otimes \mathcal{O}_X(-D)\longrightarrow \mathcal{O}_X,
    $$
    and is denoted by $\mathfrak{b}(X/Z,|D|)$.

    \item[(9)]
    For a variety $S$, by \emph{shrinking $S$}, we mean replacing $S$ by a variety $S'$ equipped with a quasi-finite morphism $S'\to S$
    (e.g., an open immersion or a finite cover), and replacing all data by their pullbacks to $S'$ while keeping the same notation.
\end{itemize}

For every variety $X$, let us define a left $\mathcal{O}_X$-algebra $\mathcal{D}_X$.

\begin{definition}[{cf. \cite[Definition 0G3Q]{Stacks}}]
Let $X$ be a variety. A \emph{differential operator} $P:\mathcal{O}_X\to \mathcal{O}_X$ of order $r$ is a $\C$-linear map such that for all local sections $g\in \mathcal{O}_X$, the map $s\mapsto D(gs)-gD(s)$ is a differential operator of order $r-1$. For the base case $r=0$, we define a differential operator of order $0$ to be an $\mathcal{O}_X$-linear map. We can define $\mathcal{D}_X$ as an $\mathcal{O}_X$-module such that for every affine $U\subseteq X$,
$$ \mathcal{D}_X(U):=\{P\mid P:\mathcal{O}_U\to \mathcal{O}_U\text{ is a differential operator}\}.$$
\end{definition}

We can define the notion of \emph{of klt type}.

\begin{definition} \label{hyogok}
Let $(X,\Delta)$ be a couple. We say that $(X,\Delta)$ is \emph{of klt type} if there exists an effective $\Q$-Weil divisor $\Delta'$ on $X$ such that $(X,\Delta+\Delta')$ is klt.
\end{definition}

\subsection{Valuation}
Throughout this subsection, we denote by $X$ a normal variety over an algebraically closed field $k$ of characteristic $0$.

\smallskip

A \emph{valuation} $\nu$ of $K(X)$ is a function $\nu:K(X)^{\times}\to \R$ satisfying the following conditions:
\begin{itemize}
    \item $\nu(fg)=\nu(f)+\nu(g)$;
    \item $\nu(f+g)\ge \min\{\nu(f),\nu(g)\}$;
    \item $\nu(a)=0$ for $a\in k$.
\end{itemize}
We also set $\nu(0)=+\infty$. Every valuation $\nu$ of $K(X)$ gives rise to a valuation ring $\mathcal{O}_{\nu}:=\{f\in K(X)\mid \nu(f)\ge 0\}$.

\smallskip

We say a valuation $\nu$ of $K(X)$ is \emph{centered on } $\xi=c_X(\nu)\in X$ if there exists an injection $\mathcal{O}_{X,\xi}\hookrightarrow \mathcal{O}_{\nu}$. We denote by $\mathrm{Val}_X$ the set of all valuations of $K(X)$ admitting a center on $X$, and we denote by $\mathrm{Val}_{X,\xi}$ the subset of $\mathrm{Val}_X$ consisting of valuations centered at $\xi$. Note that the center of a valuation is unique if it exists, due to the separatedness of $X$.

\smallskip

For an ideal $\mathfrak{a}\subseteq \mathcal{O}_X$, we define
$$ \nu(\mathfrak{a}):=\min\{\nu(f)\mid f\in \mathfrak{a}\},$$
and endow a topology on $\mathrm{Val}_X$ such that $\nu\mapsto \nu(\mathfrak{a})$ is continuous for every $\mathfrak{a}$. 

\smallskip

Let $(X,\Delta)$ be a klt pair. The \emph{log discrepancy} function
$$ A_{X,\Delta}:\mathrm{Val}_{X}\to \R\cup \{\infty\}, $$
is defined as in \cite{JM12} and \cite[Theorem 3.1]{BdFFU15}.

\subsection{Multiplier ideal sheaf}

In this subsection, we recall the definition of \emph{multiplier ideal sheaf}.

\smallskip

Let $(X,\Delta)$ be a pair, $\mathfrak{a}$ be an ideal in $\mathcal{O}_X$, and let $f:X'\to X$ be a log resolution of $(X,\Delta,\mathfrak{a})$. Let $F$ be an effective Cartier divisor on $X'$ such that $\mathfrak{a}\mathcal{O}_{X'}=\mathcal{O}_{X'}(-F)$. Let $\lambda\ge 0$. We define the \emph{multiplier ideal sheaf} $\mathcal{J}(X,\Delta,\mathfrak{a})$ as
$$ \mathcal{J}(X,\Delta,\mathfrak{a}^{\lambda})=f_*\mathcal{O}_{X'}(\ceil{K_{X'}-f^*(K_X+\Delta)-\lambda E}).$$
Note that if $\mathfrak{a}=\mathcal{O}_X(-D)$ for some effective Cartier divisor $D$ on $X$, then we write
$$ \mathcal{J}(X,\Delta+\lambda D):=\mathcal{J}(X,\Delta,\mathfrak{a}^{\lambda}).$$
We define the \emph{log canonical threshold} as
$$ \mathrm{lct}^{\mathfrak{q}}(X,\Delta,\mathfrak{a}):=\inf\{\lambda\mid \mathfrak{q}\nsubseteq \mathcal{J}(X,\Delta,\mathfrak{a}^{\lambda})\},$$
and
$$ \mathrm{lct}(X,\Delta,\mathfrak{a}):=\mathrm{lct}^{\mathcal{O}_X}(X,\Delta,\mathfrak{a}).$$
If $\mathfrak{a}=\mathcal{O}_X(-D)$ for some effective Cartier divisor, we set
$$ \mathrm{lct}(X,\Delta,rD):=\frac{1}{r}\mathrm{lct}(X,\Delta,\mathcal{O}_X(-D))$$
for a positive rational number $r$. Then, it is well-known
$$ \mathrm{lct}^{\mathfrak{q}}(X,\Delta,\mathfrak{a})=\inf_{\nu\in \mathrm{Val}^*_X}\frac{A_{X,\Delta}(\nu)+\nu(\mathfrak{q})}{\nu(\mathfrak{a})}$$
(cf. \cite[Corollary 6.9]{JM12}).

\smallskip

Let $X$ be smooth, $f:X\to S$ a surjective projective morphism, and let $L$ be an effective Cartier divisor on $X$. Then, we define that
$$ \mathcal{J}(X/S,\|L\|):=\bigcup_{0\le L'\sim_{\Q,S}L}\mathcal{J}(X,0,L').$$

\subsection{Kollár components}

The notion of \emph{the Kollár component} is first introduced in \cite{Pro00} and \cite{Xu14}, and we extend the definition. 

\begin{definition}
We define the notion of \emph{Kollár component} in two-steps:
\begin{itemize}
    \item[(1)] Let $x\in (X:=\Spec R,\Delta)$ be a klt singularity with a local ring $R$ essentially of finite type over $\C$. We say that a prime divisor $S$ over $x\in X$ is a \emph{Kollar component} if there exists a proper birational morphism $\pi:X'\to X$ such that
\begin{itemize}
    \item[(a)] $S$ is the unique exceptional divisor,
    \item[(b)] $-(K_{X'}+S+\pi^{-1}_*\Delta)$ is $\Q$-Cartier and ample over $X$, and 
    \item[(c)] $(X',S+\pi^{-1}_*\Delta)$ is plt.
\end{itemize}
The morphism $\pi:X'\to X$ is called the \emph{plt extraction} of the Kollár component $S$.
\item[(2)] Let $(X,\Delta)$ be a klt pair in general, and let $S$ be a prime divisor over $X$. We say that $S$ is a \emph{Kollár component} if
\begin{itemize}
    \item[(a)] $S\subseteq X$, or
    \item[(b)] there exists a (not necessarily closed) point $x\in X$ such that $S$ is a Kollár component of $(X_x,\Delta_x)$.
\end{itemize}
\end{itemize}
\end{definition}

Note that our definition is equivalent to the definition of the \emph{Kollár component} in \cite[Definition 2.3]{Zhu24a}.

\begin{lemma}[{cf. \cite[Lemma 2.4]{Zhu24a}}] \label{daljeonjeosuji}
Let $(X,\Delta)$ be a klt pair, and let $\mathfrak{a}\subsetneq \mathcal{O}_X$ be an ideal. Then, there exists a Kollár component computing $\mathrm{lct}(X,\Delta;\mathfrak{a})$.
\end{lemma}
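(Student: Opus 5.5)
Since $\mathfrak{a}\subsetneq\mathcal{O}_X$, the log canonical threshold is finite and positive. My plan is to first find a divisorial valuation computing it, then localize at its center and replace it by a Kollár component computing the threshold of the localized pair. The last step uses the standard local result: the local-ring version of this lemma, essentially Blum's existence of lct-computing divisors combined with Xu's plt-blowup / Kollár component construction.

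Concretely, set $c:=\mathrm{lct}(X,\Delta;\mathfrak{a})$. Take a log resolution $f:X'\to X$ of $(X,\Delta,\mathfrak{a})$. Then $c=\min_E A_{X,\Delta}(E)/\mathrm{ord}_E(\mathfrak{a})$, the minimum running over the finitely many prime divisors $E$ on $X'$ with $\mathrm{ord}_E(\mathfrak{a})>0$. This follows from the valuative formula via \cite[Corollary 6.9]{JM12}, or directly from the definition of $\mathcal{J}$.

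Choose such an $E$ attaining the minimum and let $x:=c_X(\mathrm{ord}_E)$ be the generic point of its center. If $E\subseteq X$ is a divisor on $X$ itself, then $E$ is a Kollár component by case (2)(a) and we are done. Otherwise, localize at $x$ to get a klt pair $(X_x,\Delta_x)$ with $R=\mathcal{O}_{X,x}$ essentially of finite type, and the ideal $\mathfrak{a}_x\subseteq\mathfrak{m}_x$, since $\mathrm{ord}_E(\mathfrak{a})>0$. Localization commutes with multiplier ideals, and $\mathrm{lct}(X_x,\Delta_x;\mathfrak{a}_x)=c$: it is at least $c$ because valuations centered on $X_x$ are valuations on $X$, and it is at most $c$ because $E$ survives on $X_x$.

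Next, apply the local statement to $x\in(X_x,\Delta_x)$ and $\mathfrak{a}_x$. After perturbing, one may assume $E$ is the unique lc place of $(X_x,\Delta_x+c\,\mathfrak{a}_x)$ over $x$; one can arrange this by replacing $\mathfrak{a}_x$ with a generic element combination and adding a small multiple of a divisor vanishing along other lc places. Then run a relative MMP on a dlt modification extracting only $E$, as in Xu's construction. This yields a plt extraction $Y\to X_x$ with unique exceptional divisor $S$ and $-(K_Y+S+\Delta_Y)$ ample. The divisor $S$ is an lc place of $(X_x,\Delta_x+c\,\mathfrak{a}_x)$, so it computes $c$, and it is a Kollár component of $(X_x,\Delta_x)$. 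By case (2)(b) it is therefore a Kollár component of $(X,\Delta)$, and it computes $\mathrm{lct}(X,\Delta;\mathfrak{a})$ by the equality of thresholds above.

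The main obstacle is the local step of producing a Kollár component among the lc places, with the ampleness condition. One has to perturb so that the lc center is minimal (only $x$), and then run the MMP with scaling over $X_x$, using BCHM relative to a non-closed point after spreading out. This step I would cite from \cite[Lemma 2.4]{Zhu24a} and Xu's work rather than reprove; the rest of the argument is bookkeeping on localization.
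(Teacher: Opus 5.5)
Your reduction to the local ring is fine, and it matches the paper's first step: choose $E$ computing $c$, localize at $x=c_X(\mathrm{ord}_E)$, and note that $\mathrm{lct}(X_x,\Delta_x;\mathfrak{a}_x)=c$. The gap is the local step, which is the whole content of the lemma. Your sketch of it does not work, for two reasons.

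First, the perturbation fails. Adding a small multiple of a divisor that vanishes along the other lc places makes their log discrepancies negative, so the pair stops being lc. More fundamentally, a prescribed lct-computing divisor cannot in general be made the unique lc place of an lc pair, because it would then be a Koll\'ar component. Here is an example. Let $g$ be a cubic form defining a smooth plane cubic, let $\Gamma=(g=0)\subset\mathbb{A}^3$, and let $E\cong\mathbb{P}^2$ be the exceptional divisor of $\mathrm{Bl}_0\mathbb{A}^3$. Let $F$ be the exceptional divisor of the blowup of the elliptic curve $\widetilde{\Gamma}\cap E$. Then $A_{\mathbb{A}^3}(F)=4=\mathrm{ord}_F(g)$, so $F$ computes $\mathrm{lct}(\mathbb{A}^3;(g))=1$ and can appear on your log resolution. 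But $F$ is ruled over an elliptic curve, so it is not rationally connected and hence not a Koll\'ar component. So the Koll\'ar component generally has to be a different lc place, and you need an argument that one exists with center $x$. Citing \cite[Lemma 2.4]{Zhu24a} for this step amounts to citing the statement you are proving.

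Second, you are missing the reduction to an $\mathfrak{m}_x$-primary ideal. The local result the paper invokes, \cite[Lemma 2.7]{Xu20}, is the $\mathfrak{m}_x$-primary case, where every lc place of $(X_x,\Delta_x+c\,\mathfrak{a}_x)$ is centered at $x$. Your $\mathfrak{a}_x$ is generally not $\mathfrak{m}_x$-primary: in the example, $\Gamma$ itself is an lc place whose center strictly contains $x$. Your MMP sketch does not control where the resulting divisor is centered.

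The paper fixes this as follows. Replace $\mathfrak{a}_x$ by $\mathfrak{b}:=\mathfrak{a}_x+\mathfrak{m}_x^N$, with $N\,\mathrm{ord}_E(\mathfrak{m}_x)\ge\mathrm{ord}_E(\mathfrak{a})$; this is possible since $\mathrm{ord}_E(\mathfrak{m}_x)>0$. Then $\mathrm{ord}_E(\mathfrak{b})=\mathrm{ord}_E(\mathfrak{a})$, so $c\le\mathrm{lct}(X_x,\Delta_x;\mathfrak{b})\le A_{X,\Delta}(E)/\mathrm{ord}_E(\mathfrak{b})=c$, where the first inequality holds because $\mathfrak{a}_x\subseteq\mathfrak{b}$. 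Xu's lemma now gives a Koll\'ar component $S$ over $x$ with $A_{X,\Delta}(S)=c\,\mathrm{ord}_S(\mathfrak{b})\le c\,\mathrm{ord}_S(\mathfrak{a}_x)\le A_{X,\Delta}(S)$, the last inequality because $c=\mathrm{lct}(X_x,\Delta_x;\mathfrak{a}_x)$. Hence $S$ computes $\mathrm{lct}(X,\Delta;\mathfrak{a})$, and it is a Koll\'ar component of $(X,\Delta)$ by case (2)(b). As the example shows, $S$ may differ from $E$. With this step inserted, the rest of your argument goes through.
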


\begin{proof}
Let $T$ be a prime divisor computing $\mathrm{lct}(X,\Delta;\mathfrak{a})$, and let $x\in X$ be the generic point of an irreducible component of the center of $T$.

\smallskip

We can substitute $\mathfrak{a}$ by $\mathfrak{a}_x+\mathfrak{m}^N_x$ with $N\cdot \mathrm{ord}_T(\mathfrak{m}_x)\ge \mathrm{ord}_T(\mathfrak{a})$. Indeed, since
$$ \mathrm{ord}_T(\mathfrak{a}_x+\mathfrak{m}^N_x)=\min\{\mathrm{ord}_T(\mathfrak{a}),N\cdot \mathrm{ord}_T(\mathfrak{m}_x)\}=\mathrm{ord}_T(\mathfrak{a}),$$
we obtain
$$ c:=\mathrm{lct}(X,\Delta;\mathfrak{a})\le \mathrm{lct}(X_x,\Delta_x;\mathfrak{a}_x+\mathfrak{m}^N_x)\le \frac{A_{X_x,\Delta_x}(T)}{\mathrm{ord}_T(\mathfrak{a}_x+\mathfrak{m}^N_x)}=\mathrm{lct}(X,\Delta;\mathfrak{a}).$$

\smallskip

Now, apply \cite[Lemma 2.7]{Xu20}. Then, there exists a Kollár component $S$ over $X$ such that
$$ A_{X_x,\Delta_x}(S)=c\cdot \mathrm{ord}_S(\mathfrak{a}_x+\mathfrak{m}^N_x)\le c\cdot \mathrm{ord}_S(\mathfrak{a}_x)\le A_{X_x,\Delta_x}(S),$$
and hence, $A_{X,\Delta}(S)=c\cdot \mathrm{ord}_S(\mathfrak{a})$.
\end{proof}

Note that we did not require $x\in X$ to be a localization of a variety at a closed point. We will prove a lemma that plays the crucial role in proving our main theorem, Theorem \ref{simple}.

\begin{lemma} \label{Kollar}
Let $x\in X:=\Spec R$ be an $n$-dimensional klt germ over $\C$ of characteristic $0$, and let $S$ be a Kollár component over $x\in X$. For every $f\in R$ and every order $r$ differential operator $P:R\to R$,
\begin{equation} \label{ineq}
\mathrm{ord}_S(Pf)\ge \mathrm{ord}_S(f)-rA_X(S).
\end{equation}
\end{lemma}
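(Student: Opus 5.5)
We would prove \eqref{ineq} by degenerating to the cone over the Kollár component and then using a grading argument on that cone. This is in the spirit of \cite[Section 5]{XZ26b}.

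\textbf{Step 1: reduction to the graded ring.} Let $v=\mathrm{ord}_S$ and let $\mathfrak{a}_m=\{f\in R\mid v(f)\ge m\}$ be its valuation ideals. Since $S$ is a Kollár component, the associated graded ring
$$ \mathrm{gr}_v R=\bigoplus_{m\ge 0}\mathfrak{a}_m/\mathfrak{a}_{m+1} $$
is finitely generated. Moreover $X_0:=\Spec \mathrm{gr}_v R$ is the orbifold cone over the log Fano pair $(S,\Delta_S)$, where $\Delta_S$ is the different. Thus $X_0$ is a klt cone singularity with a $\mathbb{G}_m$-action whose weight valuation $v_0$ has $A_{X_0}(v_0)=A_X(S)$.

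By Rees' construction we obtain a flat degeneration $\mathcal{X}\to\mathbb{A}^1$ with general fiber $X$ and special fiber $X_0$. The filtration $\mathfrak{a}_\bullet$ induces a filtration on differential operators: say $P$ has $v$-degree $\ge d$ if $P(\mathfrak{a}_m)\subseteq \mathfrak{a}_{m+d}$ for all $m$. The inequality to prove is exactly the statement that every order $r$ operator has $v$-degree $\ge -rA_X(S)$.

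Passing to the associated graded, an operator of $v$-degree $d$ and order $r$ gives a homogeneous differential operator of order $\le r$ and degree $d$ on $\mathrm{gr}_vR$. So it suffices to show that a nonzero homogeneous order-$r$ operator on the cone has degree $\ge -rA_{X_0}(v_0)$. There is a subtlety here: one must argue by contradiction. Suppose $v(Pf)<v(f)-rA$ for some $f$. Then choose the minimal $d$ with $P(\mathfrak a_m)\subseteq\mathfrak a_{m+d}$ for all $m$. Such a $d$ exists because $P$ is a differential operator of finite order and $\mathfrak a_\bullet$ is comparable to $\mathfrak m$-adic powers (Izumi). Then $d<-rA$, and the leading term of $P$ is a nonzero homogeneous operator of degree $d$.

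\textbf{Step 2: the cone case.} Here we use induction on $r$. An order-$0$ operator is multiplication by an element of $R$, which has degree $\ge 0$.

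For order $1$, a homogeneous derivation $\partial$ of degree $d$ on the cone extends to a vector field on the smooth locus. Write $\Omega=\omega_{X_0}^{[N]}$, with $N$ the Cartier index. The Lie derivative gives a homogeneous map on the rank-one reflexive module $\omega^{[N]}_{X_0}$, whose generator has weight $N A_{X_0}(v_0)$ because the weight of the canonical form equals the log discrepancy on a cone. Since $\mathrm{gr}_vR$ is nonnegatively graded, contracting a generator $\sigma$ with $\partial$ (via $\iota_\partial$ and the lowest-weight section) forces $d\ge -A_{X_0}(v_0)$. Concretely, $\partial$ defines a section of $T_{X_0}=\mathcal{H}om(\Omega^1,\mathcal O)$. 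Pairing $\wedge^{n}$ of the tangent sheaf with $\omega$ shows that degrees of nonzero derivations are bounded below by $-A$. This is the Euler-field computation: the Euler field has degree $0$, and degrees $<-A$ would produce a regular function of negative weight.

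For higher order, we write $[P,g]$ for $g$ homogeneous among generators and apply induction. The key point is that $P$ is determined by its commutators with the generators of $\mathrm{gr}_vR$ together with $P(1)$, and all of these have order $r-1$. Hence $\deg P\ge \min_g(\deg[P,g]-\deg g)$ up to the $P(1)$ term. However, this naive estimate loses the generator degrees, which is the wrong direction. So for higher order it is better to use the symbol: the principal symbol of $P$ lies in $\mathrm{Sym}^r T_{X_0}$ (on the smooth locus, reflexively extended). The bound for $T_{X_0}$ then multiplies to give $-rA$ for the symbol, and lower-order parts are handled by induction.

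\textbf{Main obstacle.} The hardest step is the tangent-sheaf bound on the klt cone, namely that homogeneous sections of $T_{X_0}$ have weight $\ge -A_{X_0}(v_0)$, together with its symmetric powers. We would first try to prove it via the identification of sections of $T_{X_0}$ of weight $d$ with sections of $T_{S}$-type sheaves twisted by $\mathcal{O}_S(d)$ and the Euler sequence on the orbifold cone. Vanishing for $d<-A$ should then come from the klt/log Fano property of $(S,\Delta_S)$. Specifically, $-(K_S+\Delta_S)=A\cdot L$ with $L$ the polarization, so sections of $T_S(dL)$, and their symmetric powers, vanish for $d<-A$ by comparing with anticanonical degree on covering curves.
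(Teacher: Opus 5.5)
\textbf{Verdict: genuine gap.} Your architecture is the paper's: filter by $\mathrm{ord}_S$, take the leading graded piece of $P$, pass to its principal symbol, and kill the symbol with a vanishing statement on the log Fano pair $(S,\Delta_S)$. Working on the orbifold cone $\Spec \mathrm{gr}_vR$ rather than with $\mathcal{I}^m/\mathcal{I}^{m+1}\cong\mathcal{N}^{-m}$ on the index-one stack of the plt extraction is only repackaging. A weight-$d$ operator on the cone, restricted to weight $m$, is already a differential operator $\mathcal{O}(mL)\to\mathcal{O}((m+d)L)$ on (the smooth locus of) $\mathcal{S}$, which is the paper's $D_m$.

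\textbf{Fixable slips in Step 1.} The set of admissible $d$ with $P(\mathfrak a_m)\subseteq\mathfrak a_{m+d}$ for all $m$ is downward closed, so you need the \emph{maximal} such $d$. Izumi does not give its existence. It only yields $v(Pf)\ge c\,v(f)-c'$ with $c=a/b$, which is in general $<1$; that is not a bounded shift. Instead, argue as the paper does: near the generic point of $S$ on the plt extraction, $P$ is a rational differential operator with a pole of bounded order $b$ along $S$, so $P(t^m\,\cdot)\subseteq t^{m-b-r}(\cdot)$.

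\textbf{The main gap.} It is the step you flag yourself, and it is the whole content of the lemma: one needs $H^0(\mathcal{S},\mathrm{Sym}^{[j]}T_{\mathcal{S}}\otimes\mathcal{O}(dL))=0$ for $d<-jA_X(S)$, where $-(K_S+\Delta_S)=A_X(S)L$.
\begin{itemize}
\item Your order-one argument does not work. Contracting one derivation of weight $d$ with a generator of $\omega^{[N]}_{X_0}$ gives a form-valued section, not a function. To get a function you must pair with $n$ vector fields whose weights you do not control. For $j=1$ you could instead invoke Bogomolov--Sommese vanishing via $T_{\mathcal{S}}\otimes\omega_{\mathcal{S}}\cong\Omega^{[n-2]}_{\mathcal{S}}$, but you do not.
\item For $j\ge 2$, a weight bound for $T$ does not ``multiply'' to $\mathrm{Sym}^jT$, since sections of $\mathrm{Sym}^jT$ are not products of sections of $T$.
\item Comparing degrees on covering curves gives nothing by itself. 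A line subsheaf of $\mathrm{Sym}^jT_{\mathcal{S}}|_C$ can have degree larger than $j\deg T_{\mathcal{S}}|_C$ as soon as $T_{\mathcal{S}}|_C$ has quotients of negative degree.
\end{itemize}

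\textbf{The missing idea} is a positivity statement: every Harder--Narasimhan slope of the orbifold tangent sheaf $T_{\mathcal{S}}$ with respect to $H=-(K_S+\Delta_S)$ is positive. The paper gets this from Xu--Zhuang's bound $\mu_{\max}(\Omega^1_{\mathcal{X}'}(\log\mathcal{S})|_{\mathcal{S}})<0$, which comes via $\delta(S,\Delta_S)$, together with the residue sequence. Then:
\begin{itemize}
\item $\mu_{\max}(T_{\mathcal{S}})\le\deg_H T_{\mathcal{S}}=H^{n-1}$, since all slopes are positive;
\item $\mu_{\max}(\mathrm{Sym}^{[j]}T_{\mathcal{S}})\le jH^{n-1}$, by semistability of symmetric powers in characteristic $0$;
\item $\deg_H\mathcal{O}(dL)=\frac{d}{A_X(S)}H^{n-1}$.
\end{itemize}
So a nonzero section forces $d\ge -jA_X(S)$.

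\textbf{Non-closed centres.} You implicitly take $x$ closed. The lemma must also cover non-closed $x$, because the Kollár component computing the lct may have non-closed centre. There the operators are only $\C$-linear, so you must handle $T_{\mathcal{S}/\C}$. This is an extension of $T_{\mathcal{S}/\kappa(x)}$ by the slope-zero sheaf $\mathcal{O}\otimes\mathrm{Der}_{\C}(\kappa(x))$, and the paper treats it in its Step 4.
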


\begin{proof}
First of all, we treat the case of $\kappa(x)=\C$. For the case of $S\subseteq X$, we obtain $A_X(S)=1$ and $\mathrm{ord}_S(Pf)\ge \mathrm{ord}_S(f)-r$ by direct computation. Hence, we may assume that $S$ is not on $X$.

\smallskip

We will follow the argument in \cite[Theorem 5.10]{XZ26b}. The main idea is that we can translate the problem of proving (\ref{ineq}) into the problem of computing the maximal slope of a symmetric power of a tangent bundle. The difference is that \cite{XZ26b} uses $\Omega^{[j]}_{\mathcal{X}'}(\log \mathcal{S})|_{\mathcal{S}}$ because it is treating differential \emph{forms}, while we use $\mathrm{Sym}^{[j]}_{\mathcal{S}}T_{\mathcal{S}}$ because we are treating differential \emph{operators}.

\smallskip

\noindent \textbf{Step 1.} Let $\pi:(X',S)\to X$ be the plt extraction of $S$. By adjunction, there exists $\Delta_S$ on $S$ such that
$$ (K_{X'}+S)|_S=K_S+\Delta_S\,\text{ and }(S,\Delta_S)\text{ is log Fano.}$$
Set $H:=-(K_S+\Delta_S)$. Then, $H$ is ample. Since $K_{X'}+S=\pi^*K_X+A_X(S)S$, restriction to $S$ gives 
\begin{equation} \label{H}
H\sim_{\Q} -A_X(S)S|_S.
\end{equation}
Here, we have shrunk $X$ around $x$ so that a Cartier multiple of $K_X$ is linearly trivial.

\smallskip

Consider the index-one covering stack associated with $S$: $\rho:\mathcal{X}'\to X',\,\mathcal{S}:=\rho^*S$ (cf. \cite[Definition 5.3.1]{AH11}). The divisor $\mathcal{S}$ is Cartier. There is a smooth open substack $\mathcal{U}\subseteq \mathcal{X}'$ such that $S^{\circ}:=\mathcal{S}\cap \mathcal{U}$ is smooth and $\codim_{\mathcal{S}}(\mathcal{S}\setminus \mathcal{S}^{\circ})\ge 2$.

\smallskip

Let us explain why we use an index-one cover in the sense of a \emph{stack}, rather than merely an index-one covering that renders $\mathcal{S}$ Cartier. If we only work with an index-one cover as a \emph{scheme}, we can show that $X$ admits an open cover $\{U_i\}$ such that each $U_i$ has an index-one covering scheme $U'_i\to U_i$. However, to glue the $U'_i$ together globally, we need the framework of (Deligne–Mumford) stacks.

\smallskip

Define $\mathcal{I}:=\mathcal{O}_{\mathcal{U}}(-\mathcal{S}^{\circ})$ and $\mathcal{N}:=\mathcal{O}_{\mathcal{X}'}(\mathcal{S})|_{\mathcal{S}}$. Then, $\mathcal{N}$ is the normal line bundle on the index stack. Let $h:W\overset{\tau}{\to} \mathcal{S}\overset{\rho}{\to}S$ be a finite cover that factors through $\mathcal{S}$. Then, (\ref{H}) gives 
\begin{equation} \label{yanghak}
\begin{aligned}
\deg_H \mathcal{N}:=\frac{c_1(\tau^*\mathcal{N})\cdot (h^*H)^{n-2}}{\deg \tau}=-\frac{\deg \rho}{A_X(S)}H^{n-1}.
\end{aligned}
\end{equation}

\smallskip

\noindent \textbf{Step 2.} In this step, we move our differential operator $P$ to $\mathcal{S}^{\circ}$, and to $\mathcal{S}$.

\smallskip

By localization and étale pullback, $P$ defines a rational differential operator on $\mathcal{U}$, of order at most $r$, regular away from $\mathcal{S}^{\circ}$. On an étale chart $\Spec A$, choose coordinates adapted to $\mathcal{S}^{\circ}=(t=0)$ and $b\ge 0$ such that $t^bP$ is regular. Then, the expression of $P$ using the local coordinate on $A$ gives
$$ P(t^mA)\subseteq t^{m-b-r}A \text{ for }m\ge 0$$
(cf. \cite[Proposition 1.4.6]{Yek92}). Hence, we obtain that there exists an integer $q\ge 0$ such that \begin{equation}\label{P}
P(\mathcal{I}^m)\subseteq \mathcal{I}^{m-q}\text{ for }m \ge 0.
\end{equation}
Choose $q$ as a minimum. If $q=0$, the desired inequality follows immediately. Otherwise, $P$ induces maps
$$ D_m:\mathcal{I}^m/\mathcal{I}^{m+1}\to \mathcal{I}^{m-q}/\mathcal{I}^{m-q+1}, $$
one of which is nonzero by the minimality of $q$. Fix such an $m$. Using $\mathcal{I}^k/\mathcal{I}^{k+1}\cong \mathcal{N}^{-k}|_{\mathcal{S}^{\circ}}$, we thus obtain a nonzero differential operator $D_m:\mathcal{N}^{-m}|_{\mathcal{S}^{\circ}}\to \mathcal{N}^{q-m}|_{\mathcal{S}^{\circ}}$ of order $j\le r$.


\smallskip

Since $\mathcal{S}^{\circ}$ is smooth and $D_m$ has order $j$, its principal symbol gives a non-zero section
$$ 0\ne s\in H^0\left(\mathcal{S}^{\circ},\mathrm{Sym}^{j}T_{\mathcal{S}^{\circ}}\otimes \mathcal{N}^q|_{\mathcal{S}^{\circ}}\right)$$
(cf. \cite[(11.1.4)]{HTT08} with $\mathcal{V}=\mathcal{N}^{-m}|_{\mathcal{S}^{\circ}}\oplus \mathcal{N}^{q-m}|_{\mathcal{S}^{\circ}}$). Reflexifying this gives
\begin{equation}\label{nonzero}
0\ne s \in H^0\left(\mathcal{S},\mathrm{Sym}^{[j]}T_{\mathcal{S}}\otimes \mathcal{N}^q|_{\mathcal{S}}\right).
\end{equation}

\noindent \textbf{Step 3.} In this step, we compute the maximal slope of $\mathrm{Sym}^{[j]}T_{\mathcal{S}}\otimes \mathcal{N}^{-q}|_{\mathcal{S}}$ and reduce the inequality (\ref{ineq}) to the computation of the slope. Let $h:W\overset{\tau}{\to} \mathcal{S}\overset{\rho}{\to} S$ be a cover that factors through $\mathcal{S}$ as in Step 1.

\smallskip

On $\mathcal{S}^{\circ}$, put $\mathcal{C}^{\circ}:=\Omega^1_{\mathcal{U}}(\log \mathcal{S}^{\circ})|_{\mathcal{S}^{\circ}}$. Then, we obtain
$$ 0\to \Omega^1_{\mathcal{S}^{\circ}}\to \mathcal{C}^{\circ} \to \mathcal{O}_{\mathcal{S}^{\circ}}\to 0 $$
and
\begin{equation} \label{sm2}
0\to \mathcal{O}_{\mathcal{S}^{\circ}}\to (\mathcal{C}^{\circ})^{\vee}\to T_{\mathcal{S}^{\circ}}\to 0.
\end{equation}
Let $\imath:\mathcal{S}^{\circ}\hookrightarrow \mathcal{S}$ be the inclusion, and let $\mathcal{C}:=\imath_*\mathcal{C}^{\circ}$. We obtain that by \cite[Lemma 5.5, 5.6 and 5.9]{XZ26b},
\begin{equation} \label{XuZhuang}
\mu_{\mathrm{max},\rho^*H}(\mathcal{C})\le -\frac{\min\{1,\delta(S,\Delta_S)\}}{n}H^{n-1}<0.
\end{equation}
Here, $\delta(S,\Delta_S)$ is the $\delta$-invariant in K-stability (for the definition, see \cite{FO18} and \cite{BJ20}), and we define $\mu_{\mathrm{max},\rho^*H}$ and $\mu_{\mathrm{min},\rho^*H}$ for a stack as in \cite[Definition 5.7, Proof of Theorem 5.10]{XZ26b}. Dualizing gives $\mu_{\mathrm{min},\rho^*H}(\mathcal{C}^{\vee})>0$. Since $T_{\mathcal{S}}$ is a quotient of $\mathcal{C}^{\vee}$ in codimension $1$, (\ref{sm2}) implies 
\begin{equation}\label{remaining}
\mu_{\mathrm{min},\rho^*H}(T_{\mathcal{S}})>0
\end{equation}

\smallskip

Let us prove $\mathrm{deg}_{\rho^*H} T_{\mathcal{S}}=H^{n-1}$. Indeed, Since $K_{\mathcal{S}}\sim_{\Q}\rho^*(K_S+\Delta_S)$ (cf. \cite[Proof of Lemma 5.6]{XZ26b}), we obtain that $c_1(T_{\mathcal{S}})\equiv \rho^*H$. Hence, pulling it back gives $c_1((\tau^*T_{\mathcal{S}})^{**})\equiv h^*H$. Thus, by \cite[Definition 5.7]{XZ26b},
$$ 
\begin{aligned}
\deg_{h^*H}((\tau^*T_{\mathcal{S}})^{**}) & =c_1((\tau^*T_{\mathcal{S}})^{**})\cdot (h^*H)^{n-2}
\\ &=(h^*H)^{n-1}
\\ &=(\deg h)H^{n-1}.
\end{aligned}$$

\smallskip

Let the Harder-Narasimhan factors of $(\tau^*T_{\mathcal{S}})^{**}$ have rank $e_i$ and slopes $\lambda_1>\cdots>\lambda_k$. By $\mu_{\mathrm{min},\rho^*H}(T_{\mathcal{S}})>0$, every $\lambda_i$ is positive. Therefore,
$$ (\deg \rho)H^{n-1}=\sum^k_{i=1}e_i\frac{1}{\deg \tau}\lambda_i\ge \frac{1}{\deg \tau}\lambda_1=\frac{1}{\deg \tau}\mu_{\mathrm{max},h^*H}((\tau^*T_{\mathcal{S}})^{**})=\mu_{\mathrm{max},H}(T_{\mathcal{S}}),$$
The slope inequality (cf. \cite[Theorem 4.2]{GKP16}) with the definition of slope for stacks consequently yields 
$$\mu_{\mathrm{max},\rho^*H}(\mathrm{Sym}^{[j]}T_{\mathcal{S}})\le j\cdot \deg \rho\cdot H^{n-1}.$$

\smallskip

The non-zero section (\ref{nonzero}) gives an injection
$$ \mathcal{O}_{\mathcal{S}}\hookrightarrow \mathrm{Sym}^{[j]}T_{\mathcal{S}}\otimes \mathcal{N}^q|_{\mathcal{S}}.$$
Hence, the target has a maximal slope of at least zero. Therefore, we get
$$
\begin{aligned}
0& \le \mu_{\mathrm{max},\rho^*H}\left(\mathrm{Sym}^{[j]}T_{\mathcal{S}}\otimes \mathcal{N}^q|_{\mathcal{S}}\right)
\\ &=\mu_{\mathrm{max},\rho^*H}(\mathrm{Sym}^{[j]}T_{\mathcal{S}})+q\deg_H \mathcal{N}|_{\mathcal{S}} & (1)
\\ &\le \deg \rho\cdot \left(j-\frac{q}{A_X(S)}\right)H^{n-1},
\end{aligned}
$$
here we used (\ref{yanghak}) in (1). Since $H^{n-1},\deg \rho>0$, it follows that $q\le jA_X(S)\le rA_X(S)$. Finally, at the generic point of $\mathcal{S}$, the pullback of $f$ belongs to $\mathcal{I}^{\mathrm{ord}_S(f)}$. Thus, (\ref{P}) gives 
$$\mathrm{ord}_S(Pf)\ge \mathrm{ord}_S(f)-q\ge \mathrm{ord}_S(f)-rA_X(S).$$
We complete the proof for the case of $\kappa(x)=\C$.

\smallskip

\noindent \textbf{Step 4.} Let us prove the case that $\kappa(x)\ne \C$. Put $K:=\overline{\kappa(x)}$. The argument of Step 3 applies to each irreducible component of $\mathcal{S}_K$ over $K\cong \C$. Hence, by \cite[Theorem 1.3.7]{HL10}, we obtain
\begin{equation} \label{remaining2}
\mu_{\mathrm{min},\rho^*H}(T_{\mathcal{S}/\kappa(x)})>0.
\end{equation}

\smallskip

On $\mathcal{S}^{\circ}$, we have an exact sequence
\begin{equation} \label{remaining3}
0\to T_{\mathcal{S}^{\circ}/\kappa(x)}\to T_{\mathcal{S}^{\circ}/\C}\to \mathcal{O}_{S^{\circ}}\otimes_{\kappa(x)}\mathrm{Der}_{\C}(\kappa(x))\to 0 
\end{equation}
(cf. \cite[Lemma 04B2]{Stacks}). After reflexive extension, we obtain that
$$ 0\to T_{S/\kappa(x)}\to T_{\mathcal{S}/\C}\to \mathcal{D}\to 0 $$
with $\mathcal{D}^{**}=\mathcal{O}_{\mathcal{S}}\otimes_{\kappa(x)}\mathrm{Der}_{\C}(\kappa(x))$. Hence, whenever $\mathcal{D}\ne 0$, it is semistable of slope zero.  It follows from (\ref{remaining2}) that
$$ \mu_{\mathrm{min},\rho^*H}(T_{\mathcal{S}/\C})\ge 0.$$
Moreover, by (\ref{remaining3}), $c_1(T_{\mathcal{S}/\C})=c_1(T_{\mathcal{S}/\kappa(x)})\equiv \rho^*H.$
We can see $K_{\mathcal{S}/\C}\sim_{\Q}\rho^*(K_{S/\C}+\Delta_S)$ 
(cf. \cite[Proof of Lemma 5.6]{XZ26b}).

\smallskip

By the same argument as in Step 3,
$$ \deg_{\rho^*H}T_{\mathcal{S}/\C}=\deg_{\rho^*H}T_{\mathcal{S}/\kappa(x)}=H^{n-1}.$$
We can proceed with the argument in Step 3 to prove the desired inequality.
\end{proof}

\begin{remark}
By the same argument, we expect that we can improve the inequality in Lemma \ref{Kollar} to
$$ \mathrm{ord}_S(Pf)\ge \mathrm{ord}_S(f)-r\left(1-\frac{n-1}{n}\min\{1,\delta(S,\Delta_S)\}\right)A_X(S).$$
One can ask whether we can prove Lemma \ref{Kollar} for all valuations, not only for Kollár components. We do not know the proof, since it heavily relies on the geometric structure of a prime divisor (for example, adjunction and the use of \cite[Lemma 5.9]{XZ26b}).
\end{remark}

\subsection{Nakayama asymptotic order}
The purpose of this subsection is to introduce the notion of relative asymptotic vanishing order. We adopt this concept in order to prove Lemma \ref{yakmulsan} by following the strategy in \cite[Proof of Theorem 11.2.21]{Laz04b}.

\begin{definition}
Let $\pi:X\to Z$ be a projective morphism from a normal quasi-projective variety $X$ to a quasi-projective variety $Z$. Let $D$ be a $\pi$-big effective $\Q$-Cartier $\Q$-divisor on $X$. For a valuation $\nu\in \mathrm{Val}_X$, we define the \emph{asymptotic vanishing order}, denoted by $\sigma_{\nu}(X/Z,D)$, as follows: 
$$ \sigma_{\nu,m}(X/Z,D):=\frac{\nu(\mathfrak{b}(X/Z,|mD|))}{m}$$
for a sufficiently divisible $m$, and
$$ \sigma_{\nu}(X/Z,D):=\inf_{m\ge 1} \sigma_{\nu,m}(X/Z,D)=\inf_{m\ge 1}\frac{\nu(\mathfrak{b}(X/Z,|mD|))}{m}.$$
\end{definition}

\subsection{de Fernex-Hacon log discrepancy}
In this section, we define the notion of \emph{de Fernex-Hacon log discrepancy}, and prove some lemmas we need. For more details, see \cite{dFH09}. The reason we use the notion is that de Fernex-Hacon log discrepancy provides a good criterion for being of klt type (cf. \cite[Theorem 1.2]{dFH09}).

\begin{definition}
Let $X$ be a normal variety, and let $\nu\in \mathrm{Val}_X$ be a valuation over $X$. Suppose $D$ is a Weil divisor on $X$. We define
$$ \nu^{\natural}(D):=\nu^{\natural}(\mathcal{O}_X(-D)).$$
Moreover, we define
$$ \nu(D):=\lim_{m\to \infty}\frac{1}{m!}\nu^{\natural}(m!D).$$
Note that the limit exists by \cite[Lemma 2.8]{dFH09}. If $D$ is a $\Q$-Weil divisor, then we define
$$ \nu(D):=\frac{1}{m}\nu(mD)$$
for a sufficiently divisible $m$.
\end{definition}

\begin{definition}[{cf. \cite[Definition 7.1]{dFH09}}]
Let $(X,\Delta)$ be a couple, $f:X'\to X$ a proper birational morphism of normal varieties, and $E\subseteq X'$ a prime divisor. We define
$$ A_{X,\Delta,m}(E):=1+\mathrm{ord}_E(K_{X'})-\frac{1}{m}\mathrm{ord}^{\natural}_E(m(K_X+\Delta)),$$
and
$$ A_{X,\Delta}(E):=1+\mathrm{ord}_E(K_{X'})-\mathrm{ord}_E(K_X+\Delta)=\lim_{m\to \infty}A_{X,\Delta,m}(E).$$
For a general valuation $\nu\in \mathrm{Val}_X$, we can define $A_{X,\Delta,m}(\nu)$ and $A_{X,\Delta}(\nu)$ as in \cite[Section 3.2]{BdFFU15}.
\end{definition}

Let us define the notion of the \emph{de Fernex-Hacon log canonical threshold}.

\begin{definition} \label{dFH log}
Let $(X,\Delta)$ be a klt type couple, and let $\mathfrak{a}\subseteq \mathcal{O}_X$ be an ideal. We define
$$ \mathrm{lct}^{\mathrm{dFH}}(X,\Delta;\mathfrak{a}):=\inf_{\nu\in \mathrm{Val}^*_X}\frac{A_{X,\Delta}(\nu)}{\nu(\mathfrak{a})}.$$
If $\mathfrak{a}=\mathcal{O}_X(-G)$ for some effective Weil divisor $G$ on $X$, then we denote by
$$ \mathrm{lct}^{\mathrm{dFH}}(X,\Delta;G):=\mathrm{lct}^{\mathrm{dFH}}(X,\Delta;\mathcal{O}_X(-G)).$$
\end{definition}

\begin{remark}
Perhaps a more natural definition for $\mathrm{lct}^{\mathrm{dFH}}$ is that for every effective $\Q$-Weil divisor $0\le G\sim_{\Q} -(K_X+\Delta)$,
$$ \mathrm{lct}^{\mathrm{dFH}}(X,\Delta;G):=\inf_{\substack{\nu\in \mathrm{Val}^*_X \\  -\nu(-G)>0}}\frac{A_{X,\Delta}(\nu)}{-\nu(-G)},$$
because under the definition, $\mathrm{lct}^{\mathrm{dFH}}(X,\Delta;G)\ge 1$ if and only if $(X,\Delta+G)$ is lc. However, we do not know how to prove the next Lemma \ref{yakmulsan} with the definition. In particular, the third inequality of (\ref{missing}) is missing with the definition.
\end{remark}

We will prove a criterion for being klt type.

\begin{lemma} \label{yakmulsan}
Let $(X,\Delta)\to S$ be a family of couples, and let $S'\subseteq S$ be a Zariski-dense subset of closed points such that $(X_s,\Delta_s)$ is of klt type.

\smallskip

Suppose that there exists a positive integer $r$ such that $r(K_X+\Delta)$ is integral and a divisor $0\le rG\sim -r(K_X+\Delta)$ and a positive number $c>0$ such that $\mathrm{lct}^{\mathrm{dFH}}(X_s,\Delta_s,rG_s)>c$. Then, the geometric generic fiber of $(X,\Delta)\to S$ is of klt type.
\end{lemma}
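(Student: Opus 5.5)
Following the strategy of \cite[Proof of Theorem 11.2.21]{Laz04b} and the criterion \cite[Theorem 1.2]{dFH09}, the goal is to produce on the geometric generic fiber an $m$ and a boundary built from $\left|-m(K_X+\Delta)\right|$ that makes $(X_{\overline{\eta}},\Delta_{\overline{\eta}})$ klt. By \cite[Theorem 1.2]{dFH09}, $(X_{\overline{\eta}},\Delta_{\overline{\eta}})$ is of klt type as soon as, for some $m$, the $m$-th de Fernex--Hacon log discrepancies $A_{X_{\overline{\eta}},\Delta_{\overline{\eta}},m}(E)$ are positive for all $E$; equivalently, $\mathcal{J}_m(X_{\overline{\eta}},\Delta_{\overline{\eta}})=\mathcal{O}$, where $\mathcal{J}_m$ is the multiplier ideal of $(X,\Delta)$ relative to the ideal $\mathcal{O}_X(-m(K_X+\Delta))$. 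Since $m$-th multiplier ideals of a fixed family are computed by a single log resolution after shrinking $S$, the condition $\mathcal{J}_m(X_s,\Delta_s)=\mathcal{O}_{X_s}$ for $s$ in a dense set propagates to the generic fiber. The task is therefore to find one $m$, independent of $s\in S'$, with $\mathcal{J}_m(X_s,\Delta_s)=\mathcal{O}_{X_s}$.

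\textbf{Step 1.} Fix $s\in S'$. For every valuation $\nu$, compare $A_{X_s,\Delta_s,m}(\nu)$ with $A_{X_s,\Delta_s}(\nu)$. Since $rG\sim -r(K_X+\Delta)$, we have $\mathcal{O}(-rG_s)\cdot(\text{stuff})\subseteq \mathcal{O}(r(K+\Delta))$, which gives $\nu^{\natural}(mr(K_{X_s}+\Delta_s))\le$ the asymptotic quantity plus a defect. The defect is controlled by the asymptotic vanishing order $\sigma_\nu$ of the graded system $\mathcal{O}(-mr(K+\Delta))\cong\mathcal{O}(mrG)$, exactly as in Lazarsfeld's proof. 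One then aims for a chain of the form
\begin{equation}\label{missingplan}
A_{X_s,\Delta_s,mr}(\nu)\ \ge\ A_{X_s,\Delta_s}(\nu)-\tfrac{1}{mr}\nu(\mathcal{O}(-rG_s))\cdot(\ldots)\ \ge\ A_{X_s,\Delta_s}(\nu)\Bigl(1-\tfrac{C}{mc}\Bigr)>0 .
\end{equation}
The key inputs are that $\nu^{\natural}(D)\le \nu(D)+\nu^{\natural}$ applied to $G_s$ satisfies subadditivity, and that $\nu(\mathcal{O}(-rG_s))<A_{X_s,\Delta_s}(\nu)/c$ by the hypothesis $\mathrm{lct}^{\mathrm{dFH}}>c$. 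The third inequality uses exactly that $\mathrm{lct}^{\mathrm{dFH}}$ is defined via $\nu(\mathfrak{a})$ for the ideal $\mathcal{O}(-rG_s)$, rather than via $-\nu(-G)$. Choosing $m>C/c$ with $C$ depending only on $r$ gives positivity for every $\nu$. This $m$ is uniform in $s$ because $c$ and $r$ are.

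\textbf{Step 2.} Using Step 1, $\mathcal{J}_{mr}(X_s,\Delta_s)=\mathcal{O}_{X_s}$ for all $s\in S'$. After shrinking $S$, take a log resolution of $(X,\Delta,\mathcal{O}_X(-mr(K_X+\Delta)))$ over $S$ that restricts to log resolutions on fibers, with $\mathcal{O}_X(-mr(K_X+\Delta))_s=\mathcal{O}_{X_s}(-mr(K_{X_s}+\Delta_s))$ for general $s$. By generic flatness and cohomology-and-base-change, the multiplier ideal commutes with restriction to general fibers. Density of $S'$ then gives $\mathcal{J}_{mr}(X_{\overline{\eta}},\Delta_{\overline{\eta}})=\mathcal{O}$, and \cite[Theorem 1.2]{dFH09} finishes the proof.

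\textbf{Main obstacle.} The hard part is Step 1: producing the comparison between the $m$-th and asymptotic discrepancies with a constant that does not depend on $s$. This is where the reflexive sheaves $\mathcal{O}(-k(K+\Delta))$ are not globally generated by powers of a single one. First I would use $rG$ to write $\mathcal{O}(-kr(K+\Delta))\cong\mathcal{O}(krG)\supseteq\mathcal{O}(rG)^{k}$, and apply $\nu$ to both sides via $\mathfrak{b}$ as in the definition of $\sigma_\nu$. I would then check that the resulting error term is bounded by $\nu(\mathcal{O}(-rG_s))/r$.
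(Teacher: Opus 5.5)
Your outer frame is correct and matches the paper's Step 3. It suffices to find one $m$, independent of $s\in S'$, with $A_{X_s,\Delta_s,m}(\nu)>0$ for all $\nu$, and then spread out via \cite[Corollary 10.13]{Kol23} and a fiberwise resolution before invoking \cite[Theorem 1.2]{dFH09}. The gap is Step 1, which is the whole content of the lemma and which you leave as a placeholder.

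Choose $h$ with $r(G+K_X+\Delta)=\mathrm{div}(h)$ and write $\mathcal{O}_{X_s}(-m(K_{X_s}+\Delta_s))=h_s^{m/r}\mathcal{O}_{X_s}(mG_s)$ for $r\mid m$. Then the defect $A_{X_s,\Delta_s}(\nu)-A_{X_s,\Delta_s,m}(\nu)$ equals $\frac{1}{m}\nu(\mathcal{O}_{X_s}(mG_s))-\lim_k\frac{1}{k}\nu(\mathcal{O}_{X_s}(kG_s))\ge 0$. What you need is an \emph{upper} bound on this defect that decays like $1/m$, with a constant independent of $s$, times something comparable to $A_{X_s,\Delta_s}(\nu)$.

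The containment you propose, $\mathcal{O}(rG_s)^k\subseteq\mathcal{O}(krG_s)$, only gives $\nu(\mathcal{O}(krG_s))\le k\,\nu(\mathcal{O}(rG_s))$. That says the normalized sequence decreases toward its limit, so $A_{X_s,\Delta_s,kr}\ge A_{X_s,\Delta_s,r}$. This is the wrong direction: it gives no control on how far the limit lies below the $m$-th term. Elementary inclusions of this kind (also $\mathcal{O}(kG_s)\cdot\mathcal{O}(-kG_s)\subseteq\mathcal{O}_{X_s}$) give at best a defect $\le\frac{1}{r}\nu^{\natural}(rG_s)<\frac{1}{cr}A_{X_s,\Delta_s}(\nu)$. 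That bound has no decay in $m$ and only helps when $cr>1$. Your claim that $C$ depends only on $r$ is also unsupported; uniformity in $s$ is exactly the difficulty.

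The missing ingredient is Lazarsfeld's approximation of base ideals by asymptotic multiplier ideals, carried out on the total space so that the constants are uniform in $s$. The paper proceeds as follows.
\begin{enumerate}
\item It takes a fiberwise log resolution $\pi:X'\to X$ with $\mathcal{O}_X(-rG)\cdot\mathcal{O}_{X'}=\mathcal{O}_{X'}(-rL)$ and checks $(\pi_s)_*\mathcal{O}_{X'_s}(mL_s)=\mathcal{O}_{X_s}(mG_s)$.
\item It rewrites $A_{X_s,\Delta_s,m}(\nu)=A_{X'_s,B_s}(\nu)-\sigma_{\nu,m}(X'_s/X_s,L_s)$ for a fixed divisor $B$ on $X'$. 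Working on the smooth $X'_s$ avoids the Jacobian factor that subadditivity would carry on the singular $X_s$.
\item Relative Nadel vanishing and Castelnuovo--Mumford regularity give a fixed effective Cartier divisor $A$ on $X'$ with $\mathcal{J}(X'_s/X_s,\|mL_s\|)\otimes\mathcal{O}_{X'_s}(-A_s)\subseteq\mathfrak{b}(X'_s/X_s,|mL_s|)$.
\item Subadditivity, $\mathfrak{b}(X'_s/X_s,|mqL_s|)\subseteq\mathcal{J}(X'_s/X_s,\|mL_s\|)^q$, then yields the reverse estimate $\sigma_{\nu,m}\le\sigma_\nu+\nu(A_s)/m$. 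Here $\nu(A_s)\le\varepsilon^{-1}A_{X'_s,B_s}(\nu)$ for an $\varepsilon$ independent of $s$.
\item The hypothesis $\mathrm{lct}^{\mathrm{dFH}}>c$ is used only to get $A_{X'_s,B_s}(\nu)\ge(1+cr)\sigma_\nu$. This turns the estimate into $A_{X_s,\Delta_s,m}(\nu)\ge(\frac{cr}{1+cr}-\frac{C}{m})A_{X'_s,B_s}(\nu)$.
\end{enumerate}
Without some such reverse inclusion, uniform in $s$, your displayed chain has no proof.
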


\begin{proof}
We will use a family version of \cite[Corollary 11.2.22]{Laz04b}. We use a sub-klt pair on a log resolution rather than $(X,\Delta)$ itself to avoid an unwelcome Jacobian factor in subadditivity.

\smallskip

\noindent \textbf{Step 1.} In this step, we establish a bound of the log discrepancy $A_{X'_s,B_s}(\nu)$ with the Nakayama asymptotic order $\sigma_{\nu}$ to use the argument in \cite[Corollary 11.2.22]{Laz04b} (cf. (\ref{simple1})).

\smallskip

Let $\pi:X'\to X$ be a log resolution of $(X,\Delta,\mathcal{O}_X(-rG))$. Let $L:=\frac{1}{r}f^{\natural}(rG)$ be a $\Q$-Cartier divisor on $X'$ such that 
\begin{equation}\label{asdf}
\mathcal{O}_X(-rG)\cdot \mathcal{O}_{X'}=\mathcal{O}_{X'}(-rL),
\end{equation}
Choose $h\in K(X)^{\times}$ such that $r(G+K_X+\Delta)=\mathrm{div}_X(h)$. Define 
\begin{equation} \label{as1}
B:=-L-K_{X'/S}+\frac{1}{r}\mathrm{div}_{X'}(h).
\end{equation}

\smallskip

We may assume that
\begin{itemize}
    \item $\mathcal{O}_X(-rG)\otimes \kappa(s)=\mathcal{O}_{X_s}(-rG_s)$ for every $s\in S$, and
    \item $\pi:X'\to X$ is a fiberwise log resolution over $S$ (for the definition of \emph{fiberwise log resolution}, see \cite[Definition-Lemma 2.8]{Xu20}).
\end{itemize}
The first comes from \cite[Corollary 10.13]{Kol23}, and the second follows from \cite[Definition-Lemma 2.8]{Xu20}.

\smallskip

By the equality (\ref{asdf}), we have a surjection
$$ \pi^*\mathcal{O}_X(-rG)\to \mathcal{O}_{X'}(-rL).$$
Tensoring $\kappa(s)$ gives a surjection.
$$ \pi^*_s\mathcal{O}_{X_s}(-rG_s)\to \mathcal{O}_{X'}(-rL)\otimes \kappa(s).$$
Since we may assume that $rL\to S$ is flat by generic flatness (cf. \cite[Proposition 052A]{Stacks}),
$$ \mathrm{Tor}^{\mathcal{O}_S}_1(\mathcal{O}_{rL},\kappa(s))=0,$$
and thus considering the exact sequence
$$ 0\to \mathcal{O}_{X'}(-rL)\to \mathcal{O}_{X'}\to \mathcal{O}_{rL}\to 0,$$
we obtain an inclusion $\mathcal{O}_{X'}(-rL)\otimes \kappa(s)\subseteq \mathcal{O}_{X'_s}$, and the image is $\mathcal{O}_{X_s}(-rL_s)$. Hence, we obtain
\begin{equation} \label{듸듸<3}
\mathcal{O}_{X_s}(-rG_s)\cdot \mathcal{O}_{X'_s}=\mathcal{O}_{X'_s}(-rL_s).
\end{equation}

\smallskip

let $m=r\ell$ for a positive integer $\ell$, and let $u\in \mathcal{O}_{X_s}(mG_s)$ be a local section. Then, $u\cdot \mathcal{O}_{X_s}(-rG_s)^{\ell}\subseteq \mathcal{O}_{X_s}$, and thus by (\ref{듸듸<3}), $u\cdot \mathcal{O}_{X'_s}(-mL_s)\subseteq \mathcal{O}_{X'_s}$. Hence, $u\in \mathcal{O}_{X'_s}(mL_s)$. Since
$$ \mathcal{O}_{X_s}(mG_s)\to (\pi_s)_*\mathcal{O}_{X'_s}(mL_s)$$
is injective, we deduce that for every $m\in r\Z_{>0}$ and every $s\in S'$,
\begin{equation}\label{combining1} 
(\pi_s)_*\mathcal{O}_{X'_s}(mL_s)=\mathcal{O}_{X_s}(mG_s)=h^{-\frac{m}{r}}_s\mathcal{O}_{X_s}(-m(K_{X_s}+\Delta_s)).
\end{equation}
Note that the relative base ideal can be written as
\begin{equation} \label{combining2}
\begin{aligned}
\mathfrak{b}(X'_s/X_s,|mL_s|)&:=\mathrm{Im}(\pi^*_s\pi_{*s}\mathcal{O}_{X'_s}(mL_s)\otimes \mathcal{O}_{X'_s}(-mL_s)\to \mathcal{O}_{X'_s})
\\ &=\mathrm{Im}(\pi^*_s\mathcal{O}_{X_s}(mG_s)\otimes \mathcal{O}_{X'_s}(-mL_s)\to \mathcal{O}_{X'_s})
\\ &=\mathrm{Im}(\pi^*_s\mathcal{O}_{X_s}(mG_s)\to \mathcal{O}_{X'_s}(mL_s))\otimes \mathcal{O}_{X'_s}(-mL_s)
\\ &=(\mathcal{O}_{X_s}(mG_s)\cdot \mathcal{O}_{X'_s})\otimes \mathcal{O}_{X_s}(-mL_s).
\end{aligned}
\end{equation}
Hence, combining (\ref{combining1}) and (\ref{combining2}), we obtain
\begin{equation} \label{as2}
\sigma_{\nu,m}(X'_s/X_s,L_s)=\nu(L_s)-\frac{\nu(h_s)}{r}+\frac{1}{m}\nu(\mathcal{O}_X(-m(K_{X_s}+\Delta_s))).
\end{equation}
Thus,
\begin{equation} \label{jesan}
\begin{aligned}
A_{X_s,\Delta_s,m}(\nu)&=A_{X'_s}(\nu)+
\nu(K_{X'_s})-\frac{1}{m}\nu(\mathcal{O}_{X_s}(-m(K_{X_s}+\Delta_s))) &(1)
\\ &=A_{X'_s}(\nu)+\nu(K_{X'_s})+\nu(L_s)-\frac{\nu(h_s)}{r}-\sigma_{\nu,m}(X'_s/X_s,L_s)& (2)
\\ &=A_{X'_s,B_s}(\nu)-\sigma_{\nu,m}(X'_s/X_s,L_s), & (3)
\end{aligned}
\end{equation}
where
\begin{itemize}
    \item (1) comes from the definition of $A_{X_s,\Delta_s,m}(\nu)$,
    \item (2) follows from (\ref{as2}), and
    \item (3) follows from (\ref{as1}).
\end{itemize}
Moreover, by assumption, 
\begin{equation}\label{missing}
A_{X_s,\Delta_s}(\nu)\ge c\cdot \nu^{\natural}(rG_s)\overset{(1)}{=}cr\cdot \nu(L_s)\ge cr\cdot \sigma_{\nu}(X'_s/X_s,L_s),
\end{equation}
where (1) follows from (\ref{듸듸<3}). Hence, by (\ref{jesan}),
\begin{equation} \label{simple1}
A_{X'_s,B_s}(\nu)\ge (1+cr)\cdot \sigma_{\nu}(X'_s/X_s,L_s).
\end{equation}

\smallskip

\noindent \textbf{Step 2.} In this step, using the family version of \cite[Corollary 11.2.22]{Laz04b}, we can transform $\sigma_{\nu}$ in (\ref{missing}) with $\sigma_{\nu,m}$, and hence we can use \cite[Theorem 1.2]{dFH09}. As explained above, we work with sub-klt pair of a smooth variety in order to avoid an unwelcome Jacobian factor (cf. \cite{Tak13}).

\smallskip

Let $s\in S'$. Since each $X_s$ is of klt type, $(X'_s,B_s)$ is sub-klt (cf. (\ref{jesan})). Let us prove that there exists $C>0$ that does not depend on $s,m,\nu$ such that
\begin{equation} \label{simple2}
\sigma_{\nu,m}(X'_s/X_s,L_s)\le \sigma_{\nu}(X'_s/X_s,L_s)+\frac{C}{m}A_{X'_s,B_s}(\nu).
\end{equation}
We follow the proof of \cite[Theorem 11.2.21]{Laz04b}. Indeed, let $H$ be a very ample divisor on $X'$. Then, by relative Nadel vanishing theorem (cf. \cite[Generalization 11.2.15]{Laz04b}) and Castelnuovo-Mumford regularity (cf. \cite[Example 1.8.24]{Laz04a}),
$$ \mathcal{J}(X'_s/X_s,\|mL_s\|)\otimes \mathcal{O}_{X'_s}(K_{X'_s}+(\dim X/S+1)H+mL_s)$$
is $\pi_s$-generated. Hence, there exists an effective Cartier divisor $A$ on $X'$ such that
$$ \mathcal{J}(X'_s/X_s,\|mL_s\|)\otimes \mathcal{O}_{X'_s}(-A_s)\subseteq \mathfrak{b}(X'_s/X_s,|mL_s|)$$
(cf. \cite[Theorem 11.2.21]{Laz04b}). Moreover, subadditivity states that
$$ \mathfrak{b}(X'_s/X_s,|mqL_s|)\subseteq \mathcal{J}(X'_s/X_s,\|mqL_s\|)\subseteq \mathcal{J}(X'_s/X_s,\|mL_s\|)^q\text{ for }q\ge 1$$
(cf. \cite[Corollary 11.2.4]{Laz04b}). Hence,
$$ \sigma_{\nu,m}(X'_s/X_s,L_s)\le \sigma_{\nu}(X'_s/X_s,L_s)+\frac{\nu(A_s)}{m}.$$
Finally, we can fix $\varepsilon>0$ such that $(X'_s,B_s+\varepsilon A_s)$ is sub-klt. Hence, $\nu(A_s)\le \varepsilon^{-1}A_{X'_s,B_s}(\nu)$. Put $C:=\varepsilon^{-1}$.

\smallskip

\noindent \textbf{Step 3.} Hence, we obtain
$$ 
\begin{aligned}
A_{X_s,\Delta_s,m}(\nu)&=A_{X'_s,B_s}(\nu)-\sigma_{\nu,m}(X'_s/X_s,L_s) & (1)
\\ &\ge A_{X'_s,B_s}(\nu)-\sigma_{\nu}(X'_s/X_s,L_s)-\frac{C}{m}A_{X'_s,B_s}(\nu)& (2)
\\ &\ge A_{X'_s,B_s}(\nu)-\frac{1}{1+cr}A_{X'_s,B_s}(\nu)-\frac{C}{m}A_{X'_s,B_s}(\nu) & (3)
\\ &=\left(\frac{cr}{1+cr}-\frac{C}{m}\right)A_{X'_s,B_s}(\nu),
\end{aligned}
$$
here,
\begin{itemize}
    \item (1) follows from (\ref{jesan}),
    \item (2) comes from (\ref{simple2}), and
    \item (3) is due to (\ref{simple1}).
\end{itemize}
Thus, for every sufficiently divisible $m$, $A_{X_s,\Delta_s,m}(\nu)>0$. By \cite[Corollary 10.13]{Kol23} again, after shrinking $S$, for a fixed $m$, we may assume that
$$ \mathcal{O}_X(-m(K_X+\Delta))\otimes_{\mathcal{O}_S} \kappa(s)=\mathcal{O}_{X_s}(-m(K_{X_s}+\Delta_s)).$$
Thus, by the same argument as in the proof of (\ref{듸듸<3}),
$$ A_{X_{\overline{\eta}},\Delta_{\overline{\eta}},m}(E)>0\text{ for every prime divisor }E\text{ over }X_{\overline{\eta}}.$$
Hence, \cite[Theorem 1.2]{dFH09} gives the proof.
\end{proof}

\section{Proof of the main theorem}
In this section, we prove the main theorem of this paper, Theorem \ref{simple}.

\begin{proof}
After shrinking $S$, we may assume that $X:=\Spec R$ is affine. Let $0\le G\sim_{\Q,S} -K_X$ be any effective $\Q$-Weil divisor on $X$, $r$ be a positive integer such that $rG\sim_S -rK_X$, and let $0\ne f\in \mathcal{O}_X(-rG)\subseteq R$ be an element. After shrinking $S$ again, by generic flatness (cf. \cite[Proposition 052A]{Stacks}), we may assume that both $\mathrm{div}\,f$ and $rG$ are flat over $S$. Moreover, we obtain
$$ \mathrm{lct}^{\mathrm{dFH}}(X_s;(f_s))\le \mathrm{lct}^{\mathrm{dFH}}(X_s;rG_s)$$
for every $s\in S'$. We will prove that there exists a constant $c>0$ such that $\mathrm{lct}^{\mathrm{dFH}}(X_s;(f_s))>c$; if the proof is complete, then we have $X_{\overline{\eta}}$ is of klt type by Lemma \ref{yakmulsan}.

\smallskip

We denote the base change of $f$ to the geometric generic fiber by $f$ using an abuse of notation. Since $\mathcal{O}_{X_{\overline{\eta}}}$ is $\mathcal{D}_{X_{\overline{\eta}}}$-simple, we obtain that there exists a relative differential operator $P:\mathcal{O}_{X_{\overline{\eta}}}\to \mathcal{O}_{X_{\overline{\eta}}}$ over $S$ such that $P(f)=1$. After a generic finite base change of $S$, we obtain a differential operator $P_S:R\to R$ such that $P_S\otimes 1=P$. Moreover, $P_S(f)=1$; indeed, $P_S(f)\otimes 1=(P_S\otimes 1)(f\otimes 1)=P(f)=1$ by definition, and hence $P_S(f)=1$ since $R\to R\otimes \overline{\kappa(\eta)}, \,\,\,a\mapsto a\otimes 1$ is injective. We denote $P_s:=P_S\otimes \kappa(s)$ for $s\in S'$. Then, $P_s(f_s)=1$.

\smallskip

Let $s\in S'$. By \cite{BCHM} (cf. \cite[Theorem 92]{Kol08}), we obtain that $\bigoplus_{m\ge 0}\mathcal{O}_X(-mK_{X_s})$ is finitely generated. Set $X'(s):=\Proj_{\mathcal{O}_{X_s}}\bigoplus_{m\ge 0}\mathcal{O}_X(-mK_{X_s})$, and let $\pi(s):X'(s)\to X_s$ be the natural morphism. Then, we obtain that
$$ \mathcal{O}_X(-mK_{X_s})\cdot \mathcal{O}_{X'(s)}=\mathcal{O}_{X'(s)}(-mK_{X'(s)})\text{ for a sufficiently divisible }m,$$
and hence 
$$A_{X_s}(\nu')=A_{X'(s)}(\nu')\text{ for every }\nu'\in \mathrm{Val}_{X_s},$$
and
$$ \mathrm{lct}^{\mathrm{dFH}}(X_s;(f_s))=\mathrm{lct}(X'(s);(\pi(s)^*f_s)).$$
We can extend $P_s$ to $P_{X'(s)}:\mathcal{O}_{X'(s)}\to \mathcal{O}_{X'(s)}$. Now, by Lemma \ref{daljeonjeosuji}, we obtain that there exists a Kollár component $T$ over $X'(s)$ computing $\mathrm{lct}(X'(s);(\pi(s)^*f_s))$. Applying Lemma \ref{Kollar}, we obtain that
$$ 
\begin{aligned}
\mathrm{lct}^{\mathrm{dFH}}(X_s;(f_s))&=\mathrm{lct}(X'(s);(\pi(s)^*f_s))
\\ &=\frac{A_{X'(s)}(T)}{\mathrm{ord}_T(\pi(s)^*f_s)}
\\ &\ge \frac{1}{\mathrm{ord}\,P_{X'(s)}}\ge \frac{1}{\mathrm{ord}\,P}.
\end{aligned}$$
We complete the proof.
\end{proof}

\begin{remark}
One can ask whether there is a couple version of Theorem \ref{simple}. A problem is that we do not know the pair version of Lemma \ref{Kollar}. Indeed, the naive generalization of Theorem \ref{simple} 
$$\mathrm{ord}_S(Pf)\ge \mathrm{ord}_S(f)-rA_{X,\Delta}(S)$$
is false. 

\smallskip

Let $X:=\A^3_{\C}$, $D_i=(x_i=0)$ for $i=1,2,3$, and $\Delta:=\frac{5}{6}(D_1+D_2+D_3)$. Let $\pi:X'\to X$ be the blowup at the origin, and let $S$ be the exceptional divisor. Let $\Delta_{X'}$ be the strict transform of $\Delta$. Then,
$$ K_{X'}=\pi^*K_X+2S,\,\,\, \pi^*\Delta=\Delta_Y+\frac{5}{2}S,$$
and hence
$$ K_{X'}+S+\Delta_Y=\pi^*(K_X+\Delta)+\frac{1}{2}S, \,\,\, A_{X,\Delta}(S)=\frac{1}{2}.$$
Since $-S$ is $\pi$-ample, we obtain that $S$ is a Kollár component over $X$.

\smallskip

Let $P=\frac{\partial^r}{\partial^rx^r_1}$, and let $f=x^r_1$. Then, $Pf=r!$, $\mathrm{ord}_S(f)=r$, $\mathrm{ord}_S(Pf)=0$, and thus $\mathrm{ord}_S(Pf)< \mathrm{ord}_S(f)-rA_{X,\Delta}(S)$.
\end{remark}

\begin{remark}
As a reduction mod $p$ analog, one may wonder whether $D$-simplicity plus strongly $F$-regular type property imply the klt type property. One problem is that there is no valuative approach in $F$-singularity theory (cf. \cite[4.3. Positive characteristics]{XZ26a}).
\end{remark}

\bibliographystyle{halpha}
\bibliography{biblio}

\end{document}